\documentclass[11pt,reqno]{amsart}
\usepackage{amsmath, amsthm, amscd, amsfonts, amssymb, graphicx, color}
\usepackage[bookmarksnumbered, colorlinks, plainpages]{hyperref}

\textheight 22.5truecm \textwidth 14.5truecm
\setlength{\oddsidemargin}{0.35in}\setlength{\evensidemargin}{0.35in}

\setlength{\topmargin}{-.5cm}

\newtheorem{theorem}{Theorem}[section]
\newtheorem{lemma}[theorem]{Lemma}

\theoremstyle{definition}
\newtheorem{definition}[theorem]{Definition}

\theoremstyle{remark}
\newtheorem{remark}[theorem]{Remark}
\newtheorem{note}[theorem]{Note}
\numberwithin{equation}{section}

\newcommand{\be}{\begin{equation}}
\newcommand{\ee}{\end{equation}}
\newcommand{\bea}{\begin{eqnarray}}
\newcommand{\eea}{\end{eqnarray}}
\begin{document}

\begin{center}
{\large{\textbf{$N(k)$-contact metric as $\ast$-conformal Ricci soliton}}}

\end{center}
\vspace{0.1 cm}

\begin{center}

Dibakar Dey and Pradip Majhi\\
Department of Pure Mathematics,\\
University of Calcutta,
35 Ballygunge Circular Road,\\
Kolkata - 700019, West Bengal, India,\\
E-mail: deydibakar3@gmail.com, mpradipmajhi@gmail.com\\
\end{center}

\vspace{0.3 cm}
\textbf{Abstract:} The aim of this paper is characterize a class of contact metric manifolds admitting $\ast$-conformal Ricci soliton. It is shown that if a $(2n + 1)$-dimensional $N(k)$-contact metric manifold $M$ admits $\ast$-conformal Ricci soliton or $\ast$-conformal gradient Ricci soliton, then the manifold $M$ is $\ast$-Ricci flat and locally isometric to the Riemannian of a flat $(n + 1)$-dimensional manifold and an $n$-dimensional manifold of constant curvature $4$ for $n > 1$ and flat for $n = 1$. Further, for the first case, the soliton vector field is conformal and for the $\ast$-gradient case, the potential function $f$ is either harmonic or satisfy a Poisson equation. Finally, an example is presented to support the results.

\textbf{Mathematics Subject Classification 2010:} Primary 53D15; Secondary 53A30; 35Q51.\\

\textbf{Keywords:} $N(k)$-contact metric manifolds, $\ast$-Ricci tensor, Conformal Ricci soliton, $\ast$-Conformal Ricci soliton.\\

\section{ \textbf{Introduction}}
\vspace{0.3 cm}

In 2004, Fischer \cite{fischer} introduced the notion of conformal Ricci flow as a variation of the classical Ricci flow equation. Let $M$ be an $n$-dimensional closed, connected, oriented differentiable manifold. Then the conformal Ricci flow on $M$ is defined by
\bea
\nonumber \frac{\partial g}{\partial t} + 2(S + \frac{g}{n}) = - pg \;\; \mathrm{and} \;\; r = - 1,
\eea
where $p$ is a time dependent non-dynamical scalar field, $S$ is the $(0,2)$ symmetric Ricci tensor and $r$ is the scalar curvature of the manifold.\\

\noindent The concept of conformal Ricci soliton was introduced by Basu and Bhattacharyya \cite{bb} on a $(2n + 1)$-dimensional Kenmotsu manifold as
\bea
\nonumber \pounds_V g + 2S = [2\lambda - (p + \frac{2}{2n + 1})]g,
\eea
where $\lambda$ is a constant and $\pounds_V$ is the Lie derivative along the vector field V. This notion was studied by Dey and Majhi \cite{dey1}, Nagaraja and Venu \cite{hg} and many others on several contact metric manifolds.\\

\noindent In 2002, Hamada \cite{hamada} defined the $\ast$-Ricci tensor on real hypersurfaces of complex space forms by
\bea
\nonumber S^\ast(X,Y) = g(Q^\ast X,Y) = \frac{1}{2}(trace(\phi \circ R(X,\phi Y)))
\eea
for any vector fields $X$, $Y$ on $M$, where $Q^\ast$ is the $(1,1)$ $\ast$-Ricci operator. The $\ast$-scalar curvature $r^\ast$ is defined by $r^\ast = trace(Q^\ast)$. A Riemannian manifold $M$ is called $\ast$-Ricci flat if $S^\ast$ vanishes identically.\\

\noindent Recently, several notions related to the $\ast$-Ricci tensor were introduced. In 2014, the notion of $\ast$-Ricci soliton \cite{kp} was introduced and further widely studied by several authors. In 2019, the notion of $\ast$-critical point equation \cite{dey2} was introduced and further studied by the authors in \cite{dey3}. In this paper, we study the notion of $\ast$-conformal Ricci soliton defined as
\begin{definition}
A Riemannian manifold $(M,g)$ of dimension $(2n + 1) \geq 3$ is said to admit $\ast$-conformal Ricci soliton $(g,V,\lambda)$ if
 \bea
\pounds_V g + 2S^\ast = [2\lambda - (p + \frac{2}{2n + 1})]g, \label{1.1}
\eea
where $\lambda$ is a constant, provided $S^\ast$ is symmetric.
\end{definition}
\noindent A $\ast$-conformal Ricci soliton is said to be $\ast$-conformal gradient Ricci soliton if the vector field $V$ is gradient of some smooth function $f$ on $M$. In this case, the $\ast$-conformal gradient Ricci soliton is given by
\bea
\nabla^2 f  + S^\ast = [\lambda - (\frac{p}{2} + \frac{1}{2n + 1})]g, \label{1.2}
\eea
where $(\nabla^2 f)(X,Y) = Hess f (X,Y) = g(\nabla_X Df,Y)$ is the Hessian of $f$ and $D$ is the gradient operator.\\

\noindent Note that, the $\ast$-Ricci tensor is not symmetric in general. Hence, for a non-symmetric $\ast$-Ricci tensor of a manifold, the above notion is incosistant. In a $N(k)$-contact metric manifold, $S^\ast$ is symmetric (given later) and hence, the above definition is well defined on $N(k)$-contact metric manifolds.\\

\noindent The present paper is organized as follows: In section 2, we recall some preliminary results from the literature of $N(k)$-contact metric manifolds. Section 3 deals with $N(k)$-contact metric manifolds admitting $\ast$-conformal Ricci soliton and $\ast$-conformal gradient Ricci soliton. In the final section, we present an example to verify our results.

\section{\textbf{Preliminaries}}

A $(2n + 1)$-dimensional almost contact metric manifold $M$ is a smooth manifold together with a structure $(\phi,\xi,\eta,g)$ satisfying
\bea
\phi^2 X = - X + \eta(X)\xi, \;\; \eta(\xi) = 1, \;\; \phi \xi = 0, \;\; \eta \circ \phi = 0, \label{2.1}
\eea
\bea
g(\phi X,\phi Y) = g(X,Y) - \eta(X)\eta(Y) \label{2.2}
\eea
for any vector fields $X$, $Y$ on $M$, where $\phi$ is a $(1,1)$ tensor field, $\xi$ is a unit vector field, $\eta$ is a one form defined by $\eta(X) = g(X,\xi)$ and $g$ is the Riemannian metric. Using (\ref{2.2}), we can easily see that $\phi$ is skew-symmetric, that is,
\bea
g(\phi X,Y) = - g(X,\phi Y). \label{2.3}
\eea
An almost contact metric structure becomes a contact metric structure if $g(\phi X,Y) = d\eta(X,Y)$ for all vector fields $X$, $Y$ on $M$. On a contact metric manifold, the $(1,1)$-tensor field $h$ is defined as $h = \frac{1}{2} \pounds_\xi \phi $. The tensor field $h$ is symmetric and satisfies
\bea
h\phi = - \phi h, \;\; trace(h) = trace(\phi h) = 0,\;\; h\xi = 0. \label{2.4}
\eea
Also on a contact metric manifold, we have
\bea
\nabla_X \xi = - \phi X - \phi hX. \label{2.5}
\eea
In \cite{tanno}, Tanno introduced the notion of $k$-nullity distribution on a Riemannian manifold as
\bea
\nonumber N(k) = \{ Z \in T(M) : R(X,Y)Z = k[g(Y,Z)X - g(X,Z)Y]\},
\eea
$k$ being a real number and $T(M)$ is the Lie algebra of all vector fields on $M$. If the characteristic vector field $\xi \in N(k)$, then we call a contact metric manifold as $N(k)$-contact metric manifold \cite{tanno}. However, for a $(2n + 1)$-dimensional $N(k)$-contact metric manifold, we have ( see \cite{blair1}, \cite{blair2})
\bea
h^2 = (k - 1)\phi^2, \label{2.6}
\eea
\bea
R(X,Y)\xi = k[\eta(Y)X - \eta(X)Y], \label{2.7}
\eea
\bea
R(\xi,X)Y = k[g(X,Y)\xi - \eta(Y)X], \label{2.8}
\eea
\bea
(\nabla_X \eta)Y = g(X + hX,\phi Y), \label{2.9}
\eea
\bea
(\nabla_X \phi)Y = g(X + hX,Y)\xi - \eta(Y)(X + hX), \label{2.10}
\eea
\bea
\nonumber (\nabla_X \phi h)Y &=& [g(X,hY) + (k - 1)g(X,-Y + \eta(Y)\xi)]\xi \\ && + \eta(Y)[hX + (k - 1)(- X + \eta(X)\xi)] \label{2.11}
\eea
for any vector fields $X$, $Y$ on $M$, where $R$ is the Riemann curvature tensor. For further details on $N(k)$-contact metric manifolds, we refer the reader to go through thereferences (\cite{avik}, \cite{ozgur1}, \cite{ozgur2}) and references therein.

\section{\textbf{$\ast$-Conformal Ricci soliton}}

In this section, we study the notion of $\ast$-conformal Ricci soliton  in the framework of $N(k)$-contact metric manifolds. To prove the main theorems, we need the following lemmas:

\begin{lemma} (\cite{blair3}) \label{l3.1}
A contact metric manifold $M^{2n+1}$ satisfying the condition $R(X,Y)\xi = 0$ for all $X$, $Y$ is locally isometric to the Riemannian product of a flat $(n + 1)$-dimensional manifold and an $n$-dimensional manifold of positive curvature $4$, i.e., $E^{n+1}(0) \times S^n(4)$ for $n > 1$ and flat for $n = 1$.
\end{lemma}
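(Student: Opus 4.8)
The plan is to reconstruct Blair's splitting argument \cite{blair3} from the single hypothesis $R(X,Y)\xi=0$, in three stages: extract algebraic information about $h$, produce a local de Rham splitting, and then identify each factor by computing its curvature.

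First I would observe that $R(X,Y)\xi=0$ is exactly the statement $\xi\in N(0)$, so $M^{2n+1}$ is an $N(0)$-contact metric manifold and all the identities (\ref{2.6})--(\ref{2.11}) are available with $k=0$. From (\ref{2.6}) we get $h^2=-\phi^2$, so on the contact distribution $\mathcal D=\ker\eta$ the symmetric operator $h$ has the two constant eigenvalues $1$ and $-1$; since $h\phi=-\phi h$ by (\ref{2.4}), the tensor $\phi$ interchanges the eigendistributions $\mathcal D(1)$ and $\mathcal D(-1)$, so each of them has rank $n$. Putting $X=\xi$ in (\ref{2.10}) and in (\ref{2.11}) with $k=0$ yields $\nabla_\xi\phi=0$ and $\nabla_\xi(\phi h)=0$; since $\nabla_\xi h$ takes values in $\mathcal D$ and $\phi|_{\mathcal D}$ is invertible, this forces $\nabla_\xi h=0$, so the eigendistributions $\mathcal D(\pm1)$ are invariant under $\nabla_\xi$.

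Next I would set up the decomposition. From (\ref{2.5}) one reads off $\nabla_X\xi=-\phi X-\phi hX$, hence $\nabla_X\xi=0$ for every $X$ in $\mathcal E:=\mathbb R\xi\oplus\mathcal D(-1)$ and $\nabla_X\xi=-2\phi X$ for $X\in\mathcal D(1)$; note that $\mathcal E$ is precisely the orthogonal complement of $\mathcal D(1)$. Differentiating the eigenvalue relations $hX=\pm X$ in arbitrary directions, substituting (\ref{2.5}), (\ref{2.10}), (\ref{2.11}), the relation $\nabla_\xi h=0$, and finally using the full hypothesis $R(X,Y)\xi=0$, one verifies that both $\mathcal E$ and $\mathcal D(1)$ are integrable and totally geodesic. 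Two complementary integrable totally geodesic distributions are automatically parallel, so by the (local) de Rham decomposition theorem $M$ is, around each point, a Riemannian product $M_1^{\,n+1}\times M_2^{\,n}$ with $TM_1=\mathcal E$ (so $\xi$ is tangent to $M_1$) and $TM_2=\mathcal D(1)$.

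Finally I would identify the two factors. On $M_1$: since $\nabla_X\xi=0$ for all $X\in TM_1=\mathcal E$ and $R(X,Y)\xi=0$, a short computation with the Gauss equation of the totally geodesic leaf shows that the curvature of $M_1$ vanishes, i.e. $M_1$ is locally $E^{n+1}(0)$. On $M_2$: for $X,Y\in\mathcal D(1)$ one has $\nabla_X\xi=-2\phi X$, and because the leaf is totally geodesic its intrinsic curvature agrees with the ambient $R$ restricted to $\mathcal D(1)$; feeding in the contact normalization $g(\phi X,Y)=d\eta(X,Y)$ one computes $R(X,Y)Y$ explicitly and finds constant sectional curvature $4$, so $M_2$ is locally $S^n(4)$ when $n>1$. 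When $n=1$ the distribution $\mathcal D(1)$ is one-dimensional, so $M_2$ is a curve and $M$ is a flat $3$-manifold; this gives the stated dichotomy. I expect the genuine work to be in the middle stage --- verifying that $\mathcal E$ and $\mathcal D(1)$ are integrable and totally geodesic from $R(X,Y)\xi=0$, which is the technical core of Blair's theorem --- and, within the last stage, in pinning the sectional curvature of $M_2$ to exactly $4$, which is where the contact (rather than merely almost contact metric) normalization is indispensable.
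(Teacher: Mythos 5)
The paper itself offers no proof of Lemma \ref{l3.1}: it is quoted from Blair \cite{blair3}, so the only meaningful comparison is with Blair's original argument, and your outline is essentially a reconstruction of that argument. Your first stage is correct: $R(X,Y)\xi=0$ says exactly that $\xi\in N(0)$, so (\ref{2.6}) gives $h^2=-\phi^2$, hence eigenvalues $\pm1$ on $\ker\eta$ with $\phi$ interchanging the two rank-$n$ eigendistributions, and your pairing of $\xi$ with $\mathcal D(-1)$ for the flat factor is the right one under the convention (\ref{2.5}) (if $\mathcal D(1)$ lay in the factor containing $\xi$, then $\nabla_X\xi=-2\phi X\in\mathcal D(-1)$ would leave that factor). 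The de Rham step is also sound: two complementary orthogonal distributions, each integrable with totally geodesic leaves, are parallel, so the local product decomposition follows.

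Two caveats keep this a plan rather than a proof. First, as you yourself note, the verification that $\mathbb{R}\xi\oplus\mathcal D(-1)$ and $\mathcal D(1)$ are integrable with totally geodesic leaves is the technical core of \cite{blair3} and is only asserted here. Second, your treatment of the flat factor is understated: a parallel vector field $\xi$ along the leaf together with $R(X,Y)\xi=0$ only splits off a line, i.e. gives $M_1=\mathbb{R}\times N^n$ locally; flatness of $M_1$ additionally requires showing $R(X,Y)Z=0$ for $X,Y,Z\in\mathcal D(-1)$, a curvature computation of the same nature (and comparable length) as the one pinning the sectional curvature of the $\mathcal D(1)$-leaves to $4$. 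The Gauss equation for a totally geodesic leaf only says intrinsic curvature equals the ambient curvature restricted to the leaf, which still has to be computed in those directions; in Blair's argument both curvature identifications fall out of the same covariant-derivative identities used to establish total geodesy, so you should expect to carry them out together rather than obtain the flat factor ``for free.''
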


\begin{lemma} (\cite{dey2}) \label{l3.2}
A $(2n + 1)$-dimensional $N(k)$-contact metric manifold is $\ast$-$\eta$-Einstein and the $\ast$-Ricci tensor is given by
\bea
S^\ast (X,Y) = -k[g(X,Y) - \eta(X)\eta(Y)]. \label{3.1}
\eea
\end{lemma}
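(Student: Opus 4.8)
The plan is to compute $S^\ast$ directly from its definition and show it equals the right-hand side of (\ref{3.1}); the $\ast$-$\eta$-Einstein assertion and the symmetry of $S^\ast$ are then immediate. First I would unwind the trace. Fix a local orthonormal frame $\{e_i\}_{i=1}^{2n+1}$ with $e_{2n+1}=\xi$. Using the skew-symmetry $g(\phi U,V)=-g(U,\phi V)$ of $\phi$ (equation (\ref{2.3})), the skew-symmetry of the curvature operator $R(X,\phi Y)$, and the pair symmetry $g(R(A,B)C,D)=g(R(C,D)A,B)$ of the curvature, one obtains
\[
2S^\ast(X,Y)=\sum_i g\big(\phi R(X,\phi Y)e_i,e_i\big)=\sum_i g\big(R(X,\phi Y)\phi e_i,e_i\big)=g\left(\sum_i R(\phi e_i,e_i)X,\;\phi Y\right).
\]
Writing $B:=\sum_i R(\phi e_i,e_i)$, the whole problem is reduced to computing this operator: by (\ref{2.2}), formula (\ref{3.1}) is equivalent to the identity $g(BX,\phi Y)=-2k\,g(\phi X,\phi Y)$ for all $X,Y$.

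Computing $B$ is the substantive part, and it genuinely requires the curvature of the $N(k)$-structure — the Bianchi identities and the algebra of $\phi$ alone cannot determine a trace of $R$. The curvature of an $N(k)$-contact metric manifold is controlled by the structure tensors via (\ref{2.6})--(\ref{2.11}), and I would proceed as follows. From (\ref{2.10}) and (\ref{2.11}) one first extracts the covariant derivative $\nabla h$, using $\phi\big((\nabla_X h)Y\big)=(\nabla_X\phi h)Y-(\nabla_X\phi)(hY)$ followed by a second application of $\phi$ together with $\phi^2=-\mathrm{Id}+\eta\otimes\xi$, $h\phi=-\phi h$, $h\xi=0$. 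Then, differentiating the structure equations and using (\ref{2.7})--(\ref{2.8}) to handle the components along $\xi$, one produces the curvature terms appearing in the contraction that defines $B$. Assembling these and taking the trace, every contribution linear in $h$ or $\phi h$ drops out because $\mathrm{trace}(h)=\mathrm{trace}(\phi h)=0$, while the quadratic contributions are collapsed through $h^2=(k-1)\phi^2$ (equation (\ref{2.6})); after the dust settles only the term $-2k\,\phi X$ remains, giving $g(BX,\phi Y)=-2k\,g(\phi X,\phi Y)=-2k[g(X,Y)-\eta(X)\eta(Y)]$ as needed. (One could instead invoke the known explicit curvature of a contact metric $(k,\mu)$-space, specialize to $\mu=0$, and contract; the self-contained route uses only (\ref{2.1})--(\ref{2.11}).)

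With $S^\ast(X,Y)=-k[g(X,Y)-\eta(X)\eta(Y)]$ in hand the lemma follows: the right-hand side is manifestly symmetric in $X$ and $Y$, so $S^\ast$ is symmetric — which is exactly the condition under which Definition 1.1 makes sense on $N(k)$-contact metric manifolds — and it has the form $a\,g+b\,\eta\otimes\eta$ with $a=-k$, $b=k$, i.e.\ $M$ is $\ast$-$\eta$-Einstein. The hard part is the middle step: extracting $\nabla h$ from (\ref{2.10})--(\ref{2.11}) and then carrying the long list of $h$- and $\phi h$-terms through the contraction without error, so that the cancellations $\mathrm{trace}(h)=\mathrm{trace}(\phi h)=0$ and the relation $h^2=(k-1)\phi^2$ can do their work.
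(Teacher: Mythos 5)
The paper itself does not prove Lemma \ref{l3.2}; it is imported verbatim from \cite{dey2} with a citation, so there is no in-paper argument to match and your proposal has to stand on its own. Its first step is fine: writing $2S^\ast(X,Y)=\mathrm{trace}(\phi\circ R(X,\phi Y))=g(BX,\phi Y)$ with $B=\sum_i R(\phi e_i,e_i)$ is a correct reformulation. But everything after that — ``extract $\nabla h$ from (\ref{2.10})--(\ref{2.11}), differentiate the structure equations, assemble the curvature terms, and after the dust settles only $-2k\,\phi X$ remains'' — is exactly the content of the lemma, and it is narrated rather than carried out. As written this is a plan for a computation, not a proof.

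More seriously, the advertised ``self-contained route using only (\ref{2.1})--(\ref{2.11})'' cannot work even in principle. Those identities (together with the nullity condition (\ref{2.7})--(\ref{2.8})) only pin down curvature components involving $\xi$; when you contract the Ricci identity applied to $\nabla\phi$ to reach the trace defining $B$, the transverse curvature reappears (typically through the ordinary Ricci tensor), and that is not determined by $k$ alone when $k=1$. Concretely, the unit sphere $S^{2n+1}(1)$ with its standard Sasakian structure is an $N(1)$-contact metric manifold with $h=0$ and satisfies every one of (\ref{2.1})--(\ref{2.11}); yet, with the conventions fixed in this paper, $R(X,Y)Z=g(Y,Z)X-g(X,Z)Y$ gives $\mathrm{trace}(\phi\circ R(X,\phi Y))=2g(\phi X,\phi Y)$, i.e.\ $S^\ast=g-\eta\otimes\eta$, which is not of the form $-k[g-\eta\otimes\eta]$ for $k=1$. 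Hence no purely formal manipulation of the structure equations can output the right-hand side of (\ref{3.1}); any honest derivation must inject genuinely more curvature information — in practice the explicit curvature tensor of a non-Sasakian $(k,\mu)$-space with $\mu=0$ (equivalently its known Ricci operator; in dimension $3$ one can use the Ricci-determined curvature), which is available only for $k<1$ — and the Sasakian case $k=1$ has to be handled or excluded separately. In other words, the route you relegate to a parenthesis (specializing the $(k,\mu)$ curvature and contracting) is the viable proof, while the ``self-contained'' route you actually propose is the gap.
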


\begin{note}
We observe from lemma \ref{l3.2} that the $\ast$-Ricci tensor $S^\ast$ of a $N(k)$-contact metric manifold is symmetric, i.e., $S^\ast(X,Y) = S^\ast(Y,X)$. Hence, the notion of $\ast$-conformal Ricci soliton is consistent in this setting.
\end{note}

\begin{lemma}\label{l3.4}
On a $(2n + 1)$-dimensional $N(k)$-contact metric manifold $M$, the $\ast$-Ricci tensor $S^\ast$ satisfies the following relation:
\bea
\nonumber && (\nabla_Z S^\ast)(X,Y) - (\nabla_X S^\ast)(Y,Z) - (\nabla_Y S^\ast)(X,Z) \\ \nonumber &&= - 2k[\eta(Y)g(\phi X,Z) + \eta(Y)g(\phi X,hZ) + \eta(X)g(\phi Y,Z) + \eta(X)g(\phi Y,hZ)]
\eea
for any vector fields $X$, $Y$ and $Z$ on $M$.
\end{lemma}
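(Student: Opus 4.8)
The plan is to derive the identity directly from the explicit description of the $\ast$-Ricci tensor in Lemma \ref{l3.2}. Since on a $(2n+1)$-dimensional $N(k)$-contact metric manifold $k$ is a constant and $S^\ast(X,Y) = -k[g(X,Y) - \eta(X)\eta(Y)]$, and since $g$ is parallel, differentiating covariantly and applying the product rule reduces the whole problem to the one-form $\eta$:
\[
(\nabla_Z S^\ast)(X,Y) = k[(\nabla_Z\eta)(X)\,\eta(Y) + \eta(X)\,(\nabla_Z\eta)(Y)],
\]
the cross-terms involving $g(\nabla_Z X,Y)$, $\eta(\nabla_Z X)\eta(Y)$, etc. cancelling automatically. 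I would then substitute the known covariant derivative of $\eta$ from (\ref{2.9}), namely $(\nabla_Z\eta)(X) = g(Z + hZ, \phi X)$ — equivalently one may use $\nabla_Z\xi = -\phi Z - \phi hZ$ from (\ref{2.5}) together with $\eta(X) = g(X,\xi)$ — so that each of the three covariant derivatives occurring in the statement becomes an explicit expression in $g$, $\phi$, $h$ and $\eta$.

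The second step is to write out $(\nabla_Z S^\ast)(X,Y)$, $(\nabla_X S^\ast)(Y,Z)$ and $(\nabla_Y S^\ast)(X,Z)$ with this substitution, form the prescribed combination, and regroup the terms according to which of $\eta(X)$, $\eta(Y)$, $\eta(Z)$ multiplies each piece. After regrouping one is left with sums of terms of the two forms $g(\phi A, B)$ and $g(\phi A, hB)$, which are then reduced using the structural relations already on record: the skew-symmetry of $\phi$ in (\ref{2.3}), giving $g(\phi A,B) = -g(\phi B,A)$; the symmetry of $h$; and the anticommutation $h\phi = -\phi h$ in (\ref{2.4}), which combined with the symmetry of $h$ gives $g(\phi A, hB) = g(\phi B, hA)$. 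Feeding these symmetries in produces the cancellations needed to bring the combination into the stated form.

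This is a purely computational lemma, so there is no real conceptual obstacle; the delicate point is the bookkeeping — getting every sign right and correctly recognizing which of the composite tensors built from $\phi$ and $h$ are symmetric and which are skew, since it is exactly those symmetry types that govern the cancellations. A convenient way to keep the computation short is to abbreviate $\alpha(A,B) := (\nabla_A\eta)(B) = g(A + hA, \phi B)$, rewrite each covariant derivative as $k[\alpha(\cdot,\cdot)\eta(\cdot) + \eta(\cdot)\alpha(\cdot,\cdot)]$, and observe that both $\alpha(A,B) - \alpha(B,A)$ and $\alpha(A,B) + \alpha(B,A)$ simplify immediately via (\ref{2.3}) and (\ref{2.4}); this isolates the surviving terms with essentially no case analysis.
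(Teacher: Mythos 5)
Your plan coincides step for step with the paper's own proof: covariant differentiation of the formula $S^\ast=-k[g-\eta\otimes\eta]$ of Lemma \ref{l3.2} (using $\nabla g=0$ and $k$ constant) gives $(\nabla_Z S^\ast)(X,Y)=k[((\nabla_Z\eta)X)\eta(Y)+\eta(X)((\nabla_Z\eta)Y)]$, then (\ref{2.9}) is substituted and the three covariant derivatives are combined using (\ref{2.3}) and (\ref{2.4}); this is literally the chain (\ref{3.3})--(\ref{3.6}) in the paper.

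The problem lies in the one step you leave unexecuted (``feeding these symmetries in produces the cancellations needed to bring the combination into the stated form''): if you actually perform the regrouping with the symmetries you correctly identify, namely $g(\phi A,B)=-g(A,\phi B)$ and $g(\phi A,hB)=g(\phi B,hA)$ (equivalently, $\phi h$ is symmetric), you do not land on the stated right-hand side. Grouping by $\eta(X)$, $\eta(Y)$, $\eta(Z)$: in the $\eta(Y)$-group one gets $g(Z,\phi X)-g(X,\phi Z)=2g(\phi X,Z)$ while the $h$-terms cancel, symmetrically in the $\eta(X)$-group, and in the $\eta(Z)$-group the $\phi$-terms cancel while the $h$-terms double, so the combination actually equals
\bea
\nonumber &&(\nabla_Z S^\ast)(X,Y)-(\nabla_X S^\ast)(Y,Z)-(\nabla_Y S^\ast)(X,Z)\\
\nonumber &&\qquad =2k[\eta(Y)g(\phi X,Z)+\eta(X)g(\phi Y,Z)+\eta(Z)g(\phi hX,Y)].
\eea
This differs from the asserted identity both in the overall sign of the $\phi$-terms and in the placement of the $h$-terms (they attach to $\eta(Z)$, not to $\eta(X)$ and $\eta(Y)$): for instance, with $Y=\xi$, $X,Z\perp\xi$, $k=1$, $h=0$ the combination is $2g(\phi X,Z)$ while the stated right-hand side is $-2g(\phi X,Z)$. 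So the cancellations you promise do not produce the printed formula; the same gap is present in the paper, whose proof consists of the same unexecuted final sentence and which then invokes the lemma in Theorem \ref{t3.5} with the opposite overall sign. Your method is the right one, but to have a correct proof you must carry the bookkeeping through and record the identity it actually yields, or else track the sign conventions in (\ref{2.5}), (\ref{2.9}) and Lemma \ref{l3.2} to see whether the intended statement presupposes different ones.
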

\begin{proof}
Differentiating (\ref{3.1}) covariantly along any vector field $Z$, we have
\bea
 \nabla_Z S^\ast(X,Y) = -k[\nabla_Z g(X,Y) -(\nabla_Z\eta(X))\eta(Y)-(\nabla_Z\eta(Y))\eta(X)]. \label{3.2}
\eea
Now,
\bea
\nonumber (\nabla_Z S^\ast)(X,Y)=\nabla_Z S^\ast(X,Y)-S^\ast(\nabla_Z X,Y)-S^\ast(X,\nabla_ZY).
\eea
Using (\ref{3.1}) and (\ref{3.2}) in the foregoing equation, we obtain
\bea\label{3.3}
(\nabla_Z S^\ast)(X,Y) = k[((\nabla_Z\eta)X)\eta(Y)+((\nabla_Z\eta)Y)\eta(X)]. \label{3.3}
\eea
Using (\ref{2.9}) in (\ref{3.3}), we infer that
\bea
(\nabla_Z S^\ast)(X,Y) =k[\eta(Y)g(Z + hZ,\phi X) + \eta(X)g(Z + hZ,\phi Y)].  \label{3.4}
\eea
In a similar manner, we get
\bea
(\nabla_X S^\ast)(Y,Z) = k[\eta(Z)g(X + hX,\phi Y) + \eta(Y)g(X + hX,\phi Z)]. \label{3.5}
\eea
\bea
(\nabla_Y S^\ast)(X,Z) = k[\eta(Z)g(Y + hY,\phi X) + \eta(X)g(Y + hY,\phi Z)]. \label{3.6}
\eea
With the help of (\ref{3.4})-(\ref{3.6}), we complete the proof by using (\ref{2.3}) and (\ref{2.4}).
\end{proof}
\begin{theorem} \label{t3.5}
 Let $M$ be a $(2n + 1)$-dimensional
 $N(k)$-contact metric manifold admitting $\ast$-conformal Ricci soliton $(g,V,\lambda)$, then
 \begin{itemize}
 \item[$(1)$] The manifold $M$ is locally isometric to $E^{n+1}(0) \times S^n(4)$ for $n > 1$ and flat for $n = 1$.
 \item[$(2)$] The manifold $M$ is $\ast$-Ricci flat.
 \item[$(3)$] The vector field V is conformal,
 \end{itemize}
 provided $\lambda \neq \frac{p}{2} + \frac{1}{2n + 1}$.
\end{theorem}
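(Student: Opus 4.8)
The plan is to use the explicit form of $S^\ast$ to turn the soliton equation into an identity for the flow of $V$, and then run the standard chain $\pounds_V g\rightsquigarrow\pounds_V\nabla\rightsquigarrow\pounds_V R$ to force $k=0$; the three conclusions then fall out at once. First substitute $S^\ast(X,Y)=-k[g(X,Y)-\eta(X)\eta(Y)]$ from Lemma \ref{l3.2} into \eqref{1.1}, obtaining
\[
(\pounds_V g)(X,Y)=\Big[2\lambda-\big(p+\tfrac{2}{2n+1}\big)+2k\Big]g(X,Y)-2k\,\eta(X)\eta(Y).
\]
Writing $c:=2\lambda-(p+\tfrac{2}{2n+1})$, putting $Y=\xi$ gives $(\pounds_V g)(X,\xi)=c\,\eta(X)$, and from $\pounds_V(\eta(\xi))=0$ together with $\eta=g(\,\cdot\,,\xi)$ one extracts the two auxiliary identities $\eta(\pounds_V\xi)=-\tfrac c2$ and $(\pounds_V\eta)(X)=c\,\eta(X)+g(X,\pounds_V\xi)$; these are the only soliton data used afterwards.

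\noindent Since $\pounds_V g+2S^\ast$ is a constant multiple of $g$, we have $\nabla(\pounds_V g)=-2\nabla S^\ast$, so the classical identity $2g\big((\pounds_V\nabla)(X,Y),Z\big)=(\nabla_X\pounds_V g)(Y,Z)+(\nabla_Y\pounds_V g)(X,Z)-(\nabla_Z\pounds_V g)(X,Y)$ reduces to
\[
g\big((\pounds_V\nabla)(X,Y),Z\big)=(\nabla_Z S^\ast)(X,Y)-(\nabla_X S^\ast)(Y,Z)-(\nabla_Y S^\ast)(X,Z),
\]
which is exactly what Lemma \ref{l3.4} evaluates. This produces $\pounds_V\nabla$ in closed form; in particular $(\pounds_V\nabla)(\xi,\xi)=0$ and $(\pounds_V\nabla)(X,\xi)=-2k(\phi X+h\phi X)$. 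I would then feed this into $(\pounds_V R)(X,Y)Z=(\nabla_X\pounds_V\nabla)(Y,Z)-(\nabla_Y\pounds_V\nabla)(X,Z)$ with $Y=Z=\xi$. Evaluating the right-hand side uses \eqref{2.5}, \eqref{2.6}, \eqref{2.10} and \eqref{2.11}; along the way one needs $\nabla_\xi\phi=0$ (immediate from \eqref{2.10}) and $\nabla_\xi h=0$ (from \eqref{2.11}, $h\xi=0$ and the symmetry of $h$), and the outcome should be $(\pounds_V R)(X,\xi)\xi=4k\big[(2-k)X+2hX+(k-2)\eta(X)\xi\big]$. Computing the same quantity directly from $R(X,\xi)\xi=k[X-\eta(X)\xi]$ via \eqref{2.7}, \eqref{2.8} and the two auxiliary identities gives $(\pounds_V R)(X,\xi)\xi=kc\,(X-\eta(X)\xi)$. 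Comparing the two expressions,
\[
k\big[c-4(2-k)\big]\big(X-\eta(X)\xi\big)=8k\,hX .
\]

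\noindent Taking the trace of this relation and using $\operatorname{tr}h=0$ forces $k=0$ or $c=8-4k$. In the second case the relation collapses to $hX=0$ for all $X$, i.e.\ $h=0$, whence $h^2=(k-1)\phi^2$ with $\phi^2\neq0$ gives $k=1$; plugging $k=1$ back leaves only the Sasakian case, which must be excluded by the non-degeneracy hypothesis $\lambda\neq\frac p2+\frac1{2n+1}$ (equivalently $c\neq0$). Hence $k=0$. Now everything is immediate: \eqref{2.7} gives $R(X,Y)\xi=0$, so Lemma \ref{l3.1} yields conclusion $(1)$; Lemma \ref{l3.2} gives $S^\ast\equiv0$, which is conclusion $(2)$; and the displayed form of $\pounds_V g$ collapses to $\pounds_V g=c\,g$, so $V$ is conformal, which is conclusion $(3)$.

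\noindent The step I expect to be the main obstacle is the curvature computation and its aftermath: one must carefully combine the several $N(k)$-contact metric identities (and first establish $\nabla_\xi\phi=\nabla_\xi h=0$) to pin down $(\pounds_V\nabla)$ and then $(\pounds_V R)(X,\xi)\xi$ without sign errors, and in the trace argument one must correctly dispose of the borderline value $k=1$ — which is precisely where the hypothesis on $\lambda$ enters.
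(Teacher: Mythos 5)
Your overall strategy is the same as the paper's: substitute the explicit $S^\ast$ from Lemma \ref{l3.2}, deduce $\nabla(\pounds_V g)=-2\nabla S^\ast$, pass through Yano's commutation formula and Lemma \ref{l3.4} to get $\pounds_V\nabla$, then compare the two evaluations of $(\pounds_V R)(X,\xi)\xi$ and invoke Lemma \ref{l3.1} once $k=0$. Your ``direct'' side, $(\pounds_V R)(X,\xi)\xi=kc\,(X-\eta(X)\xi)$ with $c=2\lambda-(p+\tfrac{2}{2n+1})$, agrees with the paper's \eqref{3.21}. The divergence is on the covariant-derivative side: the paper's computation \eqref{3.15}--\eqref{3.16} gives $(\nabla_X\pounds_V\nabla)(\xi,\xi)=(\nabla_\xi\pounds_V\nabla)(X,\xi)$, hence $(\pounds_V R)(X,\xi)\xi=0$ identically, so that $kc=0$ and $k=0$ follows at once from $c\neq0$; you instead obtain a nonzero expression $4k[(2-k)X+2hX+(k-2)\eta(X)\xi]$ (note also that your sign $(\pounds_V\nabla)(X,\xi)=-2k(\phi X+h\phi X)$, taken from the stated sign in Lemma \ref{l3.4}, is opposite to the $+2k(\phi X+h\phi X)$ the paper actually uses in \eqref{3.12}; such sign choices propagate directly into this formula), which forces you into the relation $k[c-4(2-k)](X-\eta(X)\xi)=8k\,hX$ and a trace argument.

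The genuine gap is in how you dispose of the second branch of that dichotomy. If $c=8-4k$ and $h=0$, then \eqref{2.6} gives $k=1$, and then $c=4$. The hypothesis $\lambda\neq\tfrac p2+\tfrac1{2n+1}$ says only $c\neq0$, which is perfectly compatible with $c=4$; so your claim that this Sasakian branch ``must be excluded by the non-degeneracy hypothesis'' is a non sequitur, and as written your argument does not prove $k=0$. Since all three conclusions (local isometry with $E^{n+1}(0)\times S^n(4)$, $\ast$-Ricci flatness, conformality of $V$) hinge on $k=0$, none of them is established in your version. To close the proof along your route you must either redo the evaluation of $(\pounds_V R)(X,\xi)\xi$ so that it reproduces the cancellation in \eqref{3.16} — your formula and the paper's cannot both be correct, so the discrepancy has to be reconciled rather than noted — or supply an independent argument ruling out the $k=1$, $h=0$ case; the stated hypothesis on $\lambda$ cannot do that job.
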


\begin{proof}
From (\ref{1.1}), we have
 \bea
 (\pounds_V g)(X,Y) + 2S^\ast(X,Y) = [2\lambda - (p + \frac{2}{2n+1})]g(X,Y). \label{3.7}
 \eea
Differentiating the above equation covariantly along any vector field $Z$, we get
\bea
(\nabla_Z \pounds_V g)(X,Y) = - 2(\nabla_Z S^\ast)(X,Y). \label{3.8}
\eea
It is well known that (see \cite{yano})
\be
\nonumber (\pounds_V \nabla_X g - \nabla_X \pounds_V g - \nabla_{[V,X]}g)(Y,Z) = -g((\pounds_V \nabla)(X,Y),Z) - g((\pounds_V \nabla)(X,Z),Y).
\ee
Since $\nabla g = 0$, then the above relation becomes
 \bea
(\nabla_X \pounds_V g)(Y,Z) = g((\pounds_V \nabla)(X,Y),Z) + g((\pounds_V \nabla)(X,Z),Y). \label{3.9}
\eea
Since $\pounds_V \nabla$ is symmetric, then it follows from (\ref{3.9}) that
\bea
\nonumber g((\pounds_V \nabla)(X,Y),Z) &=& \frac{1}{2}(\nabla_X \pounds_V g)(Y,Z) + \frac{1}{2}(\nabla_Y \pounds_V g)(X,Z) \\ && - \frac{1}{2}(\nabla_Z \pounds_V g)(X,Y). \label{3.10}
\eea
Using (\ref{3.8}) in (\ref{3.10}) we have
\be
\nonumber g((\pounds_V \nabla)(X,Y),Z) = (\nabla_Z S^\ast)(X,Y) - (\nabla_X S^\ast)(Y,Z) - (\nabla_Y S^\ast)(X,Z).
\ee
Now, using Lemma \ref{l3.4} in the foregoing equation yields
\bea
\nonumber g((\pounds_V \nabla)(X,Y),Z) &=& 2k[\eta(Y)g(\phi X,Z) + \eta(Y)g(\phi X,hZ) \\ \nonumber &&+ \eta(X)g(\phi Y,Z) + \eta(X)g(\phi Y,hZ)],
\eea
which implies
\bea
 (\pounds_V \nabla)(X,Y) = 2k[\eta(Y)\phi X + \eta(Y)h\phi X + \eta(X)\phi Y + \eta(X)h\phi Y]. \label{3.11}
\eea
Substituting $Y = \xi$ in (\ref{3.11}), we get
\bea
(\pounds_V \nabla)(X,\xi) = 2k[\phi X + h\phi X].\label{3.12}
\eea
Differentiating (\ref{3.12}) along any vector field $Y$, we obtain
\bea
\nabla_Y (\pounds_V \nabla)(X,\xi) = 2k[\nabla_Y \phi X + \nabla_Y h\phi X]. \label{3.13}
\eea
Now,
\bea
\nonumber (\nabla_Y \pounds_V \nabla)(X,\xi) = \nabla_Y (\pounds_V \nabla)(X,\xi) - (\pounds_V \nabla)(\nabla_Y X,\xi) - (\pounds_V \nabla)(X,\nabla_Y \xi).
\eea
Using (\ref{2.1})-(\ref{2.5}) and (\ref{3.11})-(\ref{3.13}) in the foregoing equation, we obtain
\bea
\nonumber (\nabla_Y \pounds_V \nabla)(X,\xi) &=& 2k[(\nabla_Y \phi)X + (\nabla_Y h\phi)X + (k - 2)\eta(X)\xi \\ && - (k - 2)\eta(X)\eta(Y)\xi - 2\eta(X)hY]. \label{3.14}
\eea
Now, using (\ref{2.10}) and (\ref{2.11}) in (\ref{3.14}), we get
\bea
\nonumber (\nabla_Y \pounds_V \nabla)(X,\xi) &=& 2k[kg(X,Y)\xi + (k - 2)\eta(Y)X \\ \nonumber && + (k - 2)\eta(X)Y  - 2\eta(X)hY \\ && - 2\eta(Y)hX - (3k - 4)\eta(X)\eta(Y)\xi]. \label{3.15}
\eea
Due to Yano \cite{yano}, it is known that
\bea
\nonumber (\pounds_V R)(X,Y)Z = (\nabla_X \pounds_V \nabla)(Y,Z) - (\nabla_Y \pounds_V \nabla)(X,Z),
\eea
Using the equation (\ref{3.15}) in the above formula, we obtain
\bea
(\pounds_V R)(X,\xi)\xi = (\nabla_X \pounds_V \nabla)(\xi,\xi) - (\nabla_\xi \pounds_V \nabla)(X,\xi) = 0. \label{3.16}
\eea
Now, substituting $Y = \xi$ in (\ref{3.7}), we have
\bea
(\pounds_V g)(X,\xi)  = [2\lambda - (p + \frac{2}{2n+1})]\eta(X), \label{3.17}
\eea
which implies
\bea
(\pounds_V \eta)X - g(X,\pounds_V \xi) = [2\lambda - (p + \frac{2}{2n+1})]\eta(X). \label{3.18}
\eea
From (\ref{3.18}), after putting $X = \xi$, we can easily obtain that
\bea
\eta(\pounds_V \xi) = -[\lambda - (\frac{p}{2} + \frac{1}{2n+1})]. \label{3.19}
\eea
Now, from (\ref{2.7}), we have
\bea
 R(X,\xi)\xi = k(X - \eta(X)\xi). \label{3.20}
\eea
With the help of (\ref{3.18})-(\ref{3.20}) and (\ref{2.7})-(\ref{2.8}), we obtain
\bea
 (\pounds_V R)(X,\xi)\xi = k[2\lambda - (p + \frac{2}{2n+1})](X - \eta(X)\xi). \label{3.21}
\eea
Equating (\ref{3.16}) and (\ref{3.21}) and then taking inner product with $Y$ yields
\bea
\nonumber  k[2\lambda - (p + \frac{2}{2n+1})](g(X,Y) - \eta(X)\eta(Y)) = 0,
\eea
which implies $k = 0$, since by hypothesis, $ \lambda \neq \frac{p}{2} + \frac{1}{2n + 1}$. Therefore, from (\ref{3.1}), we have $S^\ast = 0$, i.e., the manifold is $\ast$-Ricci flat. Again from (\ref{2.7}), we have $R(X,Y)\xi = 0$ and hence, from lemma \ref{l3.1}, it follows that the manifold $M$ is locally isometric to $E^{n+1}(0) \times S^n(4)$ for $n > 1$ and flat for $n = 1$.\\
Now, we know that a vector field $X$ on a Riemannian manifold $M$ is said to be conformal if there is a smooth function $\sigma$ on $M$ such that $\pounds_X g = 2\sigma g$. Using $S^\ast = 0$ in (\ref{3.7}), we get $\pounds_V g = 2[\lambda - (\frac{p}{2} + \frac{1}{2n + 1})]g$ and hence $V$ is a conformal vector field.
\end{proof}

\begin{note}
If $\lambda= (\frac{p}{2} + \frac{1}{2n+1})$, then from (\ref{1.1}), we can say that the $\ast$-conformal Ricci soliton reduces to a steady $\ast$-Ricci soliton. We need the following well known definition to discuss about this further.
\end{note}

\begin{definition}
On an almost contact metric manifold $M$, a vector field $V$ is said to be Killing if $\pounds_Vg=0$ and an infinitesimal contact transformation if $\pounds_V\eta=f\eta$ for some smooth function $f$ on $M$. In particular, if $f=0$, then $V$ is said to be strict infinitesimal contact transformation.
\end{definition}

\begin{remark}
If $k \neq 0$ and $\lambda = (\frac{p}{2} + \frac{1}{2n+1})$, then from (\ref{3.18}), we have $(\pounds_V\eta)X=g(X,\pounds_V\xi)$. Thus $V$ will be an infinitesimal contact transformation if $\pounds_V\xi=f\xi$ for some smooth function $f$ on $M$. But in view of (\ref{3.19}), we have $\eta(\pounds_V\xi)=0$, which implies $\pounds_V\xi \bot \xi$. Hence $\pounds_V\xi\not=f\xi$ for any smooth function $f$ on $M$, unless $f=0$ identically. Hence, $V$ cannot be an infinitesimal contact transformation on $M$ but it can be a strict infinitesimal contact transformation if $\pounds_V \xi = 0$.
\end{remark}

\begin{remark} \label{r3.9}
 If $k = 0$ and $\lambda= (\frac{p}{2} + \frac{1}{2n+1})$, then from (\ref{3.7}), we have $\pounds_Vg = 0$. Hence $V$ is a Killing vector field.
\end{remark}

To prove our next theorem regarding $\ast$-conformal gradient Ricci soliton, we first state and prove the following lemma:
\begin{lemma} \label{l3.10}
Let $M$ be $(2n + 1)$-dimensional $N(k)$-contact metric manifold admitting $\ast$-conformal gradient Ricci soliton $(g,V,\lambda)$. Then the curvature tensor $R$ can be expressed as
\be
R(X,Y)Df = k[2g(\phi X,Y)\xi - \eta(X)(\phi Y + \phi hY) + \eta(Y)(\phi X + \phi hX)] \label{3.22}
\ee
for any vector fields $X$ and $Y$ on $M$, where $ V = Df$.
\end{lemma}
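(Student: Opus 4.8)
The plan is to start from the gradient soliton equation (\ref{1.2}) and extract the Hessian of $f$ as a $(1,1)$-tensor, then compute the associated curvature operator by the standard commutation trick. From (\ref{1.2}) together with Lemma \ref{l3.2} (which gives $S^\ast(X,Y) = -k[g(X,Y) - \eta(X)\eta(Y)]$), I would first rewrite the Hessian as
\be
\nonumber \nabla_X Df = \left[\lambda - \left(\tfrac{p}{2} + \tfrac{1}{2n+1}\right) + k\right]X - k\,\eta(X)\xi,
\ee
so that $\nabla_X Df$ is expressed purely in terms of $X$, $\eta(X)$, and $\xi$. Write $c := \lambda - (\tfrac{p}{2} + \tfrac{1}{2n+1}) + k$ for brevity.

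The key step is then to compute $R(X,Y)Df = \nabla_X\nabla_Y Df - \nabla_Y\nabla_X Df - \nabla_{[X,Y]}Df$ by substituting the above expression. Differentiating $\nabla_Y Df = cY - k\eta(Y)\xi$ covariantly along $X$ produces $\nabla_X\nabla_Y Df = c\nabla_X Y - k(\nabla_X\eta)(Y)\,\xi - k\eta(Y)\nabla_X\xi$, and one uses (\ref{2.9}) for $(\nabla_X\eta)Y = g(X + hX,\phi Y)$ and (\ref{2.5}) for $\nabla_X\xi = -\phi X - \phi hX$. Antisymmetrizing in $X$ and $Y$, the $c\nabla_X Y$ terms combine with $\nabla_{[X,Y]}Df$ so that the constant $c$ drops out entirely, and one is left with the $k$-terms. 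The term $-k[(\nabla_X\eta)Y - (\nabla_Y\eta)X]\xi$ becomes $-k[g(X+hX,\phi Y) - g(Y+hY,\phi X)]\xi$; using skew-symmetry of $\phi$ (\ref{2.3}) and the identities $h\phi = -\phi h$, $h$ symmetric (\ref{2.4}), this simplifies to $2k\,g(\phi X,Y)\xi$. The remaining term $-k[\eta(Y)\nabla_X\xi - \eta(X)\nabla_Y\xi] = -k[\eta(Y)(-\phi X - \phi hX) - \eta(X)(-\phi Y - \phi hY)]$ gives exactly $-k\eta(X)(\phi Y + \phi hY) + k\eta(Y)(\phi X + \phi hX)$. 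Assembling these yields (\ref{3.22}).

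I expect the only real obstacle to be the bookkeeping in the $\xi$-component: one must carefully verify that $g(hX,\phi Y) - g(hY,\phi X) = g(\phi X, Y) \cdot(\text{something})$ collapses correctly. Explicitly, $g(hX,\phi Y) = -g(X,h\phi Y) = g(X,\phi h Y) = -g(\phi X, hY) = -g(hY,\phi X)$ by (\ref{2.3}), (\ref{2.4}), so $g(hX,\phi Y) - g(hY,\phi X) = -2g(hY,\phi X)$; combined with $g(X,\phi Y) - g(Y,\phi X) = 2g(X,\phi Y)$ one should double-check the signs so that the $h$-contributions to the $\xi$-term cancel against each other and only $2k\,g(\phi X,Y)\xi$ survives — this is where a sign slip is most likely. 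Everything else is a direct substitution of the preliminary identities (\ref{2.1})–(\ref{2.5}), (\ref{2.9}), and the constant $c$ dropping out is automatic from the antisymmetrization, so no further structural input is needed.
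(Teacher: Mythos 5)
Your proposal is correct and is essentially the paper's own argument: the paper writes the soliton equation as $\nabla_X Df = [\lambda - (\frac{p}{2} + \frac{1}{2n+1})]X - Q^\ast X$, commutes covariant derivatives to get $R(X,Y)Df = (\nabla_Y Q^\ast)X - (\nabla_X Q^\ast)Y$, and then inserts the covariant derivatives of $Q^\ast$ coming from (\ref{3.5})--(\ref{3.6}) (which rest on (\ref{2.9})), whereas you simply substitute the explicit $\ast$-$\eta$-Einstein form of $Q^\ast$ at the outset and use (\ref{2.5}) and (\ref{2.9}) directly — the same computation in a slightly different order, with the constant dropping out automatically in both versions. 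Concerning the sign you flagged: since $h$ is symmetric one has $g(hX,\phi Y) = g(X,h\phi Y) = -g(X,\phi hY) = g(\phi X,hY) = g(hY,\phi X)$ (your chain inserted a spurious minus by treating $h$ as skew), so the $h$-contributions to the $\xi$-component cancel and only $2k\,g(\phi X,Y)\xi$ survives, exactly as asserted in the main body of your proof.
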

\begin{proof}
Equation (\ref{1.2}) can be written as
\bea
\nabla_X Df = [\lambda - (\frac{p}{2} + \frac{1}{2n + 1 })]X - Q^\ast X. \label{3.23}
\eea
Differentiating (\ref{3.23}) along any vector field $Y$, we obtain
\bea
\nabla_Y \nabla_X Df = [\lambda - (\frac{p}{2} + \frac{1}{2n + 1})]\nabla_Y X - \nabla_Y Q^\ast X. \label{3.24}
\eea
Interchanging $X$ and $Y$ in the above equation, we get
\bea
\nabla_X \nabla_Y Df = [\lambda - (\frac{p}{2} + \frac{1}{2n + 1})]\nabla_X Y - \nabla_X Q^\ast Y. \label{3.25}
\eea
Again from (\ref{3.23}), we have
\bea
\nabla_{[X,Y]} Df = [\lambda - (\frac{p}{2} + \frac{1}{2n + 1})](\nabla_X Y - \nabla_Y X) - Q^\ast(\nabla_X Y - \nabla_Y X). \label{3.26}
\eea
It is well known that
\bea
\nonumber R(X,Y)Df = \nabla_X \nabla_Y Df - \nabla_Y \nabla_X Df - \nabla_{[X,Y]}Df.
\eea
Substituting (\ref{3.24})-(\ref{3.26}) in the foregoing equation, we obtain
\bea
R(X,Y)Df = (\nabla_Y Q^\ast)X - (\nabla_X Q^\ast)Y. \label{3.27}
\eea
Now, from (\ref{3.5}) and (\ref{3.6}), we can write
\bea
(\nabla_X Q^\ast)Y = k[g(X + hX,\phi Y)\xi - \eta(Y)(\phi X + \phi hX)]. \label{3.28}
\eea
\bea
(\nabla_Y Q^\ast)X = k[g(Y + hY,\phi X)\xi - \eta(X)(\phi Y + \phi hY)]. \label{3.29}
\eea
We now complete the proof by substituting (\ref{3.28}) and (\ref{3.29}) in (\ref{3.27}).
\end{proof}
\begin{theorem} \label{t3.11}
 Let $M$ be a $(2n + 1)$-dimensional $N(k)$-contact metric manifold admitting $\ast$-conformal gradient Ricci soliton $(g,V,\lambda)$, where $V = Df$ for some smooth function $f$ on $M$, then
 \begin{itemize}
     \item[$(1)$] The Manifold $M$ is locally isometric to $E^{n+1}(0) \times S^n(4)$ for $n > 1$ and flat for $n = 1$.
     \item[$(2)$] The manifold $M$ is $\ast$-Ricci flat.
     \item[$(3)$] The potential function $f$ is either harmonic or satisfy a physical Poisson equation.
 \end{itemize}
\end{theorem}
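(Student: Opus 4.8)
The plan is to squeeze all the needed information out of the curvature identity (\ref{3.22}) of Lemma \ref{l3.10} by contracting it with $\xi$. First I would take the inner product of both sides of (\ref{3.22}) with $\xi$. On the right-hand side every $\phi$-term vanishes, since $\phi\xi = 0$ forces $g(\phi W,\xi) = -g(W,\phi\xi) = 0$, leaving $g(R(X,Y)Df,\xi) = 2k\,g(\phi X,Y)$. On the left-hand side I would use the skew-symmetry $g(R(X,Y)Df,\xi) = -g(R(X,Y)\xi,Df)$ together with (\ref{2.7}), which gives $g(R(X,Y)Df,\xi) = k[\eta(X)(Yf) - \eta(Y)(Xf)]$ with $Xf := g(X,Df)$. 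Equating the two expressions produces
\[
k[\eta(X)(Yf) - \eta(Y)(Xf)] = 2k\,g(\phi X,Y).
\]

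Next I would show $k = 0$. Assume $k \neq 0$; then $\eta(X)(Yf) - \eta(Y)(Xf) = 2g(\phi X,Y)$. Setting $X = \xi$ and using $\eta(\xi) = 1$, $\phi\xi = 0$, we get $Yf = (\xi f)\eta(Y)$ for all $Y$, i.e. $Df = (\xi f)\xi$. Plugging this back into the displayed relation kills the left-hand side, so $g(\phi X,Y) = 0$ for all $X,Y$ — impossible since $g(\phi\cdot,\cdot) = d\eta$ cannot vanish on a contact manifold. Hence $k = 0$. Then (\ref{2.7}) gives $R(X,Y)\xi = 0$, so Lemma \ref{l3.1} yields $(1)$, and (\ref{3.1}) gives $S^\ast = 0$, which is $(2)$. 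Note that, unlike Theorem \ref{t3.5}, no hypothesis on $\lambda$ is needed here, which is consistent with the statement.

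Finally, for $(3)$: with $k = 0$ one has $Q^\ast = 0$, so (\ref{1.2}) reduces to $\nabla^2 f = [\lambda - (\frac{p}{2} + \frac{1}{2n+1})]g$. Taking the trace over an orthonormal frame gives $\Delta f = (2n+1)\lambda - \frac{(2n+1)p}{2} - 1$. If $\lambda = \frac{p}{2} + \frac{1}{2n+1}$ the right-hand side vanishes and $f$ is harmonic; otherwise $\Delta f$ equals the (generically non-zero) scalar $(2n+1)\lambda - \frac{(2n+1)p}{2} - 1$, i.e. $f$ satisfies a Poisson equation. The only delicate points I anticipate are keeping the curvature sign conventions straight in the contraction step and invoking the non-degeneracy of the contact form to rule out $k \neq 0$; everything else is routine substitution of the $N(k)$-identities (\ref{2.1})--(\ref{2.11}) and (\ref{3.1}).
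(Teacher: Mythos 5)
Your proposal is correct, and it reaches the conclusion by a noticeably cleaner route than the paper at the key step. Both you and the paper start from Lemma \ref{l3.10}: contracting (\ref{3.22}) with $\xi$ (you) versus setting $X=\xi$, taking an inner product and antisymmetrizing (the paper) produce essentially the same scalar identity $k[\eta(X)(Yf)-\eta(Y)(Xf)]=2k\,g(\phi X,Y)$ (the paper's displayed version carries an immaterial sign slip on the $\phi$-term coming from its equation (\ref{3.30})). The difference is in how $k=0$ is extracted. The paper substitutes $X=\xi$, thereby discarding the $g(\phi X,Y)$-term, and is left with the dichotomy ``$k=0$ or $Df=(\xi f)\xi$''; it must then run a separate Case 2, differentiating $Df=(\xi f)\xi$, comparing with (\ref{3.23}) and ``comparing coefficients'' of $X$, $\xi$, $\phi X$ against (\ref{3.1}) to force $k=0$ and $\lambda=\frac{p}{2}+\frac{1}{2n+1}$. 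You instead keep the $\phi$-term and observe that $k\neq 0$ would force $Df=(\xi f)\xi$ and hence $g(\phi X,Y)\equiv 0$, contradicting the non-degeneracy of the contact form $d\eta(X,Y)=g(\phi X,Y)$ in dimension $\geq 3$; so $k=0$ unconditionally (in fact, restricting your identity to $X,Y\perp\xi$ gives $k\,g(\phi X,Y)=0$ and $k=0$ even more directly). This eliminates the paper's Case 2 entirely, and in particular sidesteps its somewhat informal coefficient-comparison step. What you lose is only the paper's side observation that the situation $Df=(\xi f)\xi$ forces $\lambda=\frac{p}{2}+\frac{1}{2n+1}$ and $f$ harmonic, which is not needed for the statement. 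Your conclusion of items $(1)$--$(3)$ from $k=0$ — Lemma \ref{l3.1}, $S^\ast=0$ via (\ref{3.1}), and the trace of (\ref{1.2}) giving $\Delta f=(2n+1)\bigl[\lambda-\bigl(\frac{p}{2}+\frac{1}{2n+1}\bigr)\bigr]$, hence harmonic or Poisson — coincides with the paper's.
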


\begin{proof}
Putting $X = \xi$ in (\ref{3.22}), we obtain
\bea
\nonumber R(\xi,Y)Df = - k[\phi Y + \phi hY].
\eea
Taking inner product of the foregoing equation yields
\bea
g(R(\xi,Y)Df,X) = - k[g(\phi X,Y) + g(\phi hX,Y)]. \label{3.30}
\eea
Since $g(R(\xi,Y)Df,X) = - g(R(\xi,Y)X,Df)$, then using (\ref{2.8}), we obtain
\bea
g(R(\xi,Y)Df,X) = - kg(X,Y)(\xi f) + k\eta(X)(Yf). \label{3.31}
\eea
Equating (\ref{3.30}) and (\ref{3.31}) and then antisymmetrizing yields
\bea
\nonumber k\eta(X)(Yf) - k\eta(Y)(Xf) - 2kg(\phi X,Y) = 0.
\eea
Putting $X = \xi$ in the above equation, we obtain
\bea
\nonumber k[(Yf) - (\xi f)\eta(Y)] = 0,
\eea
which implies
\bea
\nonumber k[Df - (\xi f)\xi] = 0.
\eea
Hence, either $k = 0$ or $Df = (\xi f)\xi$.\\

Case 1: If $k = 0$, then (\ref{3.1}) implies $S^\ast = 0$. Also from (\ref{2.7}), $R(X,Y)\xi = 0$. Using lemma \ref{l3.1}, we can say that $M$ is locally isometric to $E^{n+1}(0) \times S^n(4)$ for $n > 1$ and flat for $n = 1$. Again using $S^\ast = 0$ in (\ref{1.2}) and then tracing yields $\Delta f = [\lambda - (\frac{p}{2} + \frac{1}{2n + 1})](2n + 1)$, where $\Delta$ is the Laplace operator. Hence, $f$ satisfies a
Poisson equation.\\

Case 2: If $Df = (\xi f)\xi$, then differentiating this along any vector field $X$, we obtain
\bea
\nabla_X Df = (X(\xi f))\xi + (\xi f)(- \phi X - \phi hX).  \label{3.32}
\eea
Equating (\ref{3.23}) and (\ref{3.32}), we get
\bea
Q^\ast X = [\lambda - (\frac{p}{2} + \frac{1}{2n + 1})]X - (X(\xi f))\xi + (\xi f)(\phi X + \phi hX). \label{3.33}
\eea
Comparing the coefficients of $X$, $\xi$ and $\phi X$ from (\ref{3.1}) and (\ref{3.33}), we obtain the followings
\bea
\lambda - (\frac{p}{2} + \frac{1}{2n + 1}) = - k. \label{3.34}
\eea
\bea
X(\xi f) = k\eta(X). \label{3.35}
\eea
\bea
(\xi f) = 0. \label{3.36}
\eea
Using (\ref{3.36}) in (\ref{3.35}), we obtain $k = 0$. Hence, from (\ref{3.34}), $\lambda = \frac{p}{2} + \frac{1}{2n + 1}$. Since $k = 0$, by the same argument as in case 1, the manifold $M$ is $\ast$-Ricci flat and locally isometric to $E^{n+1}(0) \times S^n(4)$ for $n > 1$ and flat for $n = 1$. Again using $S^\ast = 0$ and $\lambda = \frac{p}{2} + \frac{1}{2n + 1}$ in (\ref{1.2}), we obtain $\nabla^2 f = 0$, which implies $\Delta f = 0$. Therefore, $f$ is harmonic. This completes the proof.
\end{proof}

\section{Example}
In \cite{ucd}, the authors presented an example of a $3$-dimensional $N(1 - \alpha^2)$-contact metric manifold. Using the expressions of the curvature tensor and several values of the linear connection, we can easily calculate the followings:\\
$$S^\ast(e_1,e_1) = 0, \;\; S^\ast(e_2,e_2) = S^\ast(e_3,e_3) = - (1 - \alpha^2).$$
$$(\pounds_{e_1} g)(X,Y) = 0 \;\; \mathrm{for\; all} \;\; X,\; Y \in \{e_1, e_2, e_3\}.$$
Now, if we consider $\alpha = 1$, then the curvature tensor $R$ vanishes and also $S^\ast = 0$. Tracing (\ref{1.1}), we get $\lambda = \frac{p}{2} + \frac{1}{3}$. Thus $(g,e_1,\lambda)$ is a $\ast$-conformal Ricci soliton on this $N(0)$-contact metric manifold. Here, note that $e_1$ is a Killing vector field. This verifies our theorem \ref{t3.5} and remark \ref{r3.9}.\\
Again if $e_1 = Df$ for some smooth function $f$, then tracing (\ref{1.2}) and considering $\alpha = 1$, we have $\Delta f = [\lambda - (\frac{p}{2} + \frac{1}{3})]3$, which is a Poisson equation. Also if $\lambda = \frac{p}{2} + \frac{1}{3}$, then $\Delta f = 0$ and therefore, $f$ is harmonic. This verifies our theorem \ref{t3.11}.\\

\subsection*{Acknowledgements}
The author Dibakar Dey is thankful to the Council of Scientific and Industrial Research, India (File No. 09/028(1010)/2017-EMR-1) for their assistance.

\end{document}